\newtheorem{exmp}{Example}[section]
\newtheorem{theorem}{Theorem}[section]
\newtheorem{proposition}[theorem]{Proposition}
\newtheorem{remark}{Remark}[section]
\newtheorem{assum}{Assumption}[section]
\DeclareMathOperator*{\argmin}{arg\,min}
\title{\LARGE \bf
Data-driven Rollout for Deterministic Optimal Control
}
\author{Yuchao Li, Karl H. Johansson, Jonas M\aa rtensson, Dimitri P. Bertsekas
\thanks{This work was supported by the Swedish Foundation for Strategic Research, the Swedish Research
Council, and the Knut and Alice Wallenberg Foundation.}
\thanks{Y. Li, K. H. Johansson, and J. M\aa rtensson are with the division of Decision and Control Systems, KTH Royal Institute of Technology, Sweden,
        {\tt\small yuchao,jonas1,kallej@kth.se}}%
\thanks{D. P. Bertsekas is Fulton Professor of Computational Decision Making, ASU, Tempe, AZ, and McAfee Professor of Engineering, MIT, Cambridge, MA,
        {\tt\small dbertsek@asu.edu, dimitrib@mit.edu}}
}
\begin{document}

\maketitle
\thispagestyle{empty}
\pagestyle{empty}

\begin{abstract}
We consider deterministic infinite horizon optimal control problems with nonnegative stage costs. We draw inspiration from learning model predictive control scheme designed for continuous dynamics and iterative tasks, and propose a rollout algorithm that relies on sampled data generated by some base policy. The proposed algorithm is based on value and policy iteration ideas, and applies to deterministic problems with arbitrary state and control spaces, and arbitrary dynamics. It admits extensions to problems with trajectory constraints, and a multiagent structure.
\end{abstract}

\section{INTRODUCTION}\label{sec:introduction}
Many practical challenges lend themselves to be formulated as deterministic infinite horizon optimal control problems. Those problems can be addressed in principle by dynamic programming (DP) methodology via value iteration (VI), and policy iteration (PI) algorithms. However, VI and PI are often limited by large problem size. Instead, one needs to resort to various approximate, sample-based schemes that are based on VI and PI ideas. Among them, rollout has stood out due to its simplicity and reliability. For problems with continuous components, model predictive control (MPC) is another major option. It relies on constrained optimization and has been developed independently of DP. In learning MPC (LMPC) one uses sampled trajectories for obtaining solutions, in a way that is similar to rollout. In this paper we explore these connections further, we develop a rollout algorithm that extends LMPC to arbitrary state and control spaces, system dynamics, and we discuss applications to multiagent problems.
  
We consider problems involving stationary dynamics
\begin{equation}
\label{eq:dynamics}
    x_{k+1}=f(x_k,u_k),\quad k=0,\,1,\,\dots,
\end{equation}
where $x_k$ and $u_k$ are state and control at stage $k$, which belong to state and control spaces $X$ and $U$, respectively, and $f$ maps $X\times U$ to $X$. The control $u_k$ must be chosen from a nonempty constraint set $U(x_k)\subset U$ that may depend on $x_k$. The cost of applying $u_k$ at state $x_k$ is denoted by $g(x_k,u_k)$, and is assumed to be nonnegative:
\begin{equation}
\label{eq:cost}
    0\leq g(x_k,u_k)\leq\infty,\quad x_k\in X,\,u_k\in U(x_k).
\end{equation}
By allowing an infinite value of $g(x,u)$ we can implicitly introduce state constraints: a pair $(x,u)$ is infeasible if $g(x,u)=\infty$. We consider feedback policies of the form $\{\mu,\,\mu,\,\dots\}$, with $\mu$ being a function mapping $X$ to $U$ and satisfying $\mu(x)\in U(x)$ for all $x$. This type of polices are called \emph{stationary}. When no confusion arises, with a slight abuse of notation, we also denote $\{\mu,\,\mu,\,\dots\}$ as $\mu$. There is no restriction on the state space $X$, and the control space $U$, so the problems addressed here are very general.

The \emph{cost function} of a policy $\mu$, denoted by $J_\mu$, maps $X$ to $[0,\infty]$, and is defined at any initial state $x_0 \in X$, as
\begin{equation}
\label{eq:pi_cost_function}
    J_\mu(x_0)= \sum_{k=0}^\infty g(x_k,\mu(x_k)),
\end{equation}
where $x_{k+1}=f(x_k,\mu(x_k))$, $k=0,\,1,\,\dots$. The optimal cost function $J^*$ is defined pointwise as
\begin{equation}
    J^*(x_0)=\inf_{u_k\in U(x_k),\ k=0,1,\ldots\atop x_{k+1}=f(x_k,u_k),\ k=0,1,\ldots}\sum_{k=0}^\infty  g(x_k,u_k).
\end{equation}
A stationary policy $\mu^*$ is called optimal if
\begin{equation*}
    J_{\mu^*}(x)=J^*(x),\quad \forall x\in X.
\end{equation*}

In the context of DP, one hopes to obtain a stationary optimal policy $\mu^*$. Two exact DP algorithms for addressing the problem are VI and PI. However, these algorithms may be intractable, due to the \emph{curse of dimensionality}, which refers to the explosion of the computation as the cardinality of $X$ increases. Thus, various approximation schemes have been proposed, where one aims to obtain some suboptimal policy $\Tilde{\mu}$ such that $J_{\Tilde{\mu}}\approx J^*$. 

One such scheme is MPC, which is widely applied to problems with continuous state and control spaces, stage-decoupled state and control constraints, and continuous dynamics, without requiring continuous state and control space discretization. For a given state constraint set, denoted by $C$, and at any $x \in C$, the MPC algorithm solves the constrained optimization problem
\begin{subequations}
\label{eq:mpc}
    \begin{align}
	\min_{\{u_k\}_{k=0}^{\ell-1}}& \quad \sum_{k=0}^{\ell-1} g(x_k,u_k)+G(x_\ell)\\
	\mathrm{s.\,t.} & \quad x_{k+1}=f(x_k,u_k),\,k = 0,...,\ell-1,\label{eq:mpc_dynamics}\\
	& \quad x_k\in C,\,  u_k \in U(x_k), \ k = 0,...,\ell-1, \label{eq:mpc_inputconstraints}\\
	& \quad  x_0 = x. \label{eq:mpc_intialcondition}
	\end{align}
\end{subequations}
The function $G$ is some real-valued function, which is often designed as an approximation to the cost function of a certain policy, as given in \eqref{eq:pi_cost_function}, while fulfilling additional requirements that we will discuss later. With the optimal attained at $(\Tilde{u}_0,\Tilde{u}_1,\dots,\Tilde{u}_{\ell-1})$, the policy $\Tilde{\mu}$ applied to the system is given by setting $\Tilde{\mu}(x)=\Tilde{u}_0$. The above procedure is repeated to select the control at each stage.

There are two challenges in the MPC scheme introduced above. First, investigating conditions under which the MPC problem \eqref{eq:mpc} is feasible. Second, designing the function $G$ so that the resulting system is stable. Those two challenges are often addressed independently, with independent lines of analysis for feasibility and stability. With abundant capability to generate data, there have been many proposals of MPC variants that provide guarantee of both feasibility and stability while taking advantage of data. A notable approach, relevant to our study, is a scheme known as LMPC introduced in \cite{rosolia2017learning}. It is designed for problems with continuous state and control spaces, continuous dynamics, and problems where the same initial $x_0$ is repeatedly visited. Its analysis follows the classic lines of arguments, where the above challenges are investigated independently.      

When going beyond the scope addressed by MPC, a reliable and simple methodology is \emph{rollout}, which is couched on rigorous VI and PI based analysis. It requires availability of a policy $\mu^0$, referred to as a \emph{base policy}, and at a given state $x_0$, it computes the values of $J_{\mu^0}$ for all subsequent states of interests. At any state, it solves an optimization problem similar to \eqref{eq:mpc}, with $J_{\mu^0}$ in place of $G$ (see Section~\ref{sec:main} for a precise definition). This is referred to as rollout with $\ell$-step lookahead. The policy given by the computation is referred to as the \emph{rollout policy}. When $\ell=1$, rollout is one step of the PI algorithm applied to $x_0$. Rollout and MPC have clear similarities, and it is not surprising that they are closely related. Still, the analysis of rollout revolves around a concept known as \emph{sequential improvement condition}, introduced in \cite{bertsekas1997rollout} and \cite{bertsekas2005dynamic},
\begin{equation}
\label{eq:sequential_imp1}
    \inf_{u\in U(x)} \big\{g(x,u)+J_\mu\big(f(x,u)\big)\big\}\leq J_\mu(x),
\end{equation}
and the constraints considered may be different in nature to the ones considered in MPC, such as constraints imposed on entire trajectories. While rollout is a less ambitious method than the VI and PI algorithms, the rollout policy $\Tilde{\mu}$ is guaranteed to outperform the base policy $\mu^0$. In fact, rollout can be viewed as one step of Newton's method for solving a fixed point equation regarding $J^*$, as explained in \cite{bertsekas2021lessons}. Since Newton's method has a superlinear convergence property, the performance improvement achieved by the policy $\Tilde{\mu}$ is often dramatic and sufficient for practical purposes.

In this paper, we draw inspiration from LMPC and propose a new variant of rollout. It uses sampled trajectories, and requires access to $J_{\mu^0}(x_\ell)$ for only some $x_\ell$ reached from $x_0$ in $\ell$ steps, instead of all such $x_\ell$ as is required by traditional rollout with $\ell$ step lookahead. The states $x$ for which the values $J_{\mu^0}(x)$ are available form a subset $S^0\subset X$. This set is assumed to have the following invariance property under the base policy $\mu^0$:
\begin{equation}
\label{eq:s0_intro}
    f\big(x,\mu^0(x)\big)\in S^0,\quad \forall x\in S^0.
\end{equation}
With such a set $S^0$, we will show that the cost improvement property of the rollout algorithm remains intact. One prime case where such a set $S^0$ can be constructed is for the problem with a nonempty stopping set $X_s\subset X$, which consists of cost-free and forward invariant states in the sense that
\begin{equation*}
    g(x,u)=0,\quad f(x,u)\in X_s,\quad \forall x\in X_s,\,u\in U(x).
\end{equation*}
If we can use the base policy $\mu^0$ to generate a trajectory $\{x_0,\mu^0(x_0),x_1,\mu^0(x_1),\dots,x_j\}$ that starts at some initial state $x_0$ and ends at a stopping state $x_j\in X_s$, the values $J_{\mu^0}(x)$ for all $x=x_0,\dots,x_j$ can be computed.

By introducing the rollout viewpoint, we extend the LMPC scheme to problems with arbitrary state and control spaces, arbitrary system dynamics and nonnegative costs. We show that the method admits extensions to problems where there is a trajectory constraint. When the base policy $\mu^0$ is available for all $x\in S^0$, it allows distributed computation, which is useful for multiagent problems. For yet another extension, if multiple such subsets $S^i$ for multiple different policies $\mu^i$ are available, they can be combined together to form a set $S$ and the performance may be further improved. The contributions of our proposed scheme, are as follows:
\begin{itemize}
    \item[(a)] A rollout algorithm that provides cost improvement over a base policy, for arbitrary state and control spaces, and system dynamics with nonnegative stage costs.
    \item[(b)] An extension to trajectory constraints and the sequential improvement theoretical framework that goes with it.
    \item[(c)] Validation of the methodology using examples ranging from hybrid systems, trajectory constrained problems, multiagent problems, and discrete and combinatorial optimization.
\end{itemize}

The paper is organized as follows. In Section~\ref{sec:background} we provide background and references, which place in context our results in relation to the literature. In Section~\ref{sec:main}, we describe our rollout algorithm and its variants, and we provide analysis. In Section~\ref{sec:exmp}, we demonstrate the validity of our algorithm through examples.

\section{Background}\label{sec:background}
The study of nonnegative cost optimal control with an infinite horizon dates back to the thesis and paper \cite{strauch1966negative}, following the seminal earlier research of \cite{blackwell1967positive,blackwell1965discounted}. Owing to its broad applicability, the followup research has been voluminous and comprehensive, with extensive results provided in the book \cite{bertsekas1978stochastic}; see \cite{bertsekas2012dynamic} for a more accessible discussion.

Among those results, if the minima in the relevant optimization are assumed to be attained, our analysis here relies on only two classical results, which hold well beyond the scope of our study. This is one key contributing factor to the wide applicability of rollout in general and our algorithm in particular. These two results are parts (a) and (b) of the following proposition.
\begin{proposition}\label{prop:classic}
Let the nonnegativity condition \eqref{eq:cost} hold. 
\begin{itemize}
    \item[(a)] For all stationary policies $\mu$ we have
    \begin{equation}
    \label{eq:bellman_mu}
        J_\mu(x)=g\big(x,\mu(x)\big)+J_\mu\big(f\big(x,\mu(x)\big)\big),\,\forall x\in X.
    \end{equation}
    \item[(b)] For all stationary policies $\mu$, if a nonnegative function $\Bar{J}:X\to [0,\infty]$ satisfies
    \begin{equation*}
        g\big(x,\mu(x)\big)+\Bar{J}\big(f\big(x,\mu(x)\big)\big)\leq \Bar{J}(x),\,\forall x\in X,
    \end{equation*}
    then $\Bar{J}$ is an upper bound of $J_\mu$, e.g., $J_\mu(x)\leq \Bar{J}(x)$ for all $x\in X$.
\end{itemize}
\end{proposition}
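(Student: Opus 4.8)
The plan is to derive both parts directly from the series definition \eqref{eq:pi_cost_function} of $J_\mu$, exploiting throughout that the stage costs are nonnegative so that every partial sum is monotonically nondecreasing and the infinite series converges in $[0,\infty]$ without any rearrangement concerns. I would state this monotonicity observation once at the outset, so that both parts can invoke it cleanly.

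For part (a), I would fix an arbitrary initial state $x_0=x$ and let $\{x_k\}$ be the trajectory generated by $\mu$, i.e.\ $x_{k+1}=f(x_k,\mu(x_k))$. Splitting off the first term of the nonnegative series gives
\begin{equation*}
    J_\mu(x_0) = g\bigl(x_0,\mu(x_0)\bigr) + \sum_{k=1}^\infty g\bigl(x_k,\mu(x_k)\bigr).
\end{equation*}
The crux is to recognize the tail sum. Since $\mu$ is stationary and the dynamics are time-invariant, the trajectory emanating from $x_1=f(x_0,\mu(x_0))$ is exactly $\{x_1,x_2,\dots\}$, so by the definition of $J_\mu$ applied at $x_1$ the tail equals $J_\mu(x_1)=J_\mu\bigl(f(x_0,\mu(x_0))\bigr)$. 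Substituting yields \eqref{eq:bellman_mu}. The only thing to verify is that the identity survives when $J_\mu(x_0)=\infty$, which it does, because then at least one of the two nonnegative summands on the right is itself infinite.

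For part (b), I would again fix $x_0=x$ with its $\mu$-generated trajectory and unroll the assumed inequality along it. Applying the hypothesis at $x_0$, then at $x_1$, and so on, a straightforward induction on $N$ gives the finite-horizon bound
\begin{equation*}
    \Bar{J}(x_0) \geq \sum_{k=0}^{N-1} g\bigl(x_k,\mu(x_k)\bigr) + \Bar{J}(x_N) \geq \sum_{k=0}^{N-1} g\bigl(x_k,\mu(x_k)\bigr),
\end{equation*}
where the last step discards the nonnegative term $\Bar{J}(x_N)\geq 0$. Letting $N\to\infty$ and using that the partial sums increase to $J_\mu(x_0)$ gives $\Bar{J}(x_0)\geq J_\mu(x_0)$, as desired.

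The arguments are short, and the only genuine subtlety---the main obstacle if one is careless---is the bookkeeping around the value $+\infty$. Nonnegativity of $g$ and of $\Bar{J}$ is precisely what removes this difficulty: it guarantees monotone convergence of the partial sums, legitimizes splitting off the first term in part (a) even when the total is infinite, and makes the discarded tail term $\Bar{J}(x_N)$ harmless in part (b). No attainment or convergence hypothesis on the dynamics is needed, which is why these two facts hold well beyond the present setting.
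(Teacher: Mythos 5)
Your proof is correct. Note, however, that the paper does not prove Proposition~\ref{prop:classic} at all: it cites \cite[Prop.~4.1.2]{bertsekas2012dynamic} for part (a) and \cite[Prop.~4.1.4(a)]{bertsekas2012dynamic} for part (b), remarking only that (b) follows from (a) together with the monotonicity property \eqref{eq:monotone}. The cited proofs are phrased in operator form: one shows $T_\mu^N \Bar{J}\leq \Bar{J}$ by induction using monotonicity, and compares with $J_\mu=\lim_N T_\mu^N J_0$ for $J_0\equiv 0$, a framework built to cover the stochastic nonnegative-cost case. Your argument is precisely the deterministic specialization of that: splitting off the first term of the series is part (a), i.e., the fixed point property of $T_\mu$, and your unrolled inequality $\Bar{J}(x_0)\geq \sum_{k=0}^{N-1} g(x_k,\mu(x_k))+\Bar{J}(x_N)$ is $(T_\mu^N\Bar{J})(x_0)\leq\Bar{J}(x_0)$ combined with $T_\mu^N 0\leq T_\mu^N\Bar{J}$ evaluated along the single $\mu$-trajectory. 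What your route buys is self-containedness and elementarity --- no operator formalism, no appeal to monotone convergence beyond nondecreasing partial sums in $[0,\infty]$ --- and you correctly identify nonnegativity as the only thing preventing $\infty-\infty$ pathologies; what the cited route buys is uniformity with the stochastic theory and reuse of general lemmas. Your closing observation that no attainment hypothesis is needed matches the paper's own remark that these two results ``hold well beyond the scope of our study.''
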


Prop.~\ref{prop:classic}(a) can be found in \cite[Prop.~4.1.2]{bertsekas2012dynamic}. Prop.~\ref{prop:classic}(b) can be found in \cite[Prop.~4.1.4(a)]{bertsekas2012dynamic}, and is established through Prop.~\ref{prop:classic}(a) and the following monotonicity property: For all nonnegative functions $J$ and $J'$ that map $X$ to $[0,\infty]$, if $J(x)\leq J'(x)$ for all $x\in X$, then for all $x\in X,\,u\in U(x)$, we have
    \begin{equation}
    \label{eq:monotone}
        g(x,u)+J\big(f(x,u)\big)\leq g(x,u)+J'\big(f(x,u)\big).
    \end{equation}

It has long been recognized that the application of the exact forms of VI and PI algorithms are computationally prohibitive. This has motivated suboptimal but less computationally intensive schemes, as noted by Bellman in his classical book \cite{bellman1957dynamic}. In this regard, approximation schemes patterned after the VI and PI algorithms have had great success, and the rollout methodology has stood out thanks to its simple form and reliable performance.

Proposed first in \cite{tesauro1996line} for addressing backgammon, rollout has since been modified and extended for combinatorial optimization \cite{bertsekas1997rollout}, stochastic scheduling \cite{bertsekas1999rollout}, vehicle routing \cite{secomandi2001rollout}, and more recently, Bayesian optimization \cite{lam2016bayesian,yue2020non}. It admits variants that are suitable for problems with multiagent structure (suggested first in \cite[Section~6.1.4]{bertsekas1996neuro} and investigated thoroughly in \cite{bertsekas2021multiagent}), and problems with trajectory constraints \cite{bertsekas2005rollout}. It underlies the success of AlphaZero and related high-profile successes in the context of games, when combined with Monte-Carlo simulation and neural networks \cite{silver2017mastering}, \cite{tesauro1996line}.

Even with all those extensions, the existing forms of rollout do not address the cases where the values $J_{\mu^0}$ are available only in some selected states of $X$, not all of them. Situations of this type arise when obtaining $\mu^0$ is a challenge itself and can only be stored for some selective states, as can be the case for the trajectory constrained problems that we consider in this paper. The interest in this type of settings is justified in view of the abundant availability of data, which could be historical trajectories collected and stored off-line. In this work, we address such problems and show that as long as the values of $J_{\mu^0}$ are available over a subset that has an invariance property like \eqref{eq:s0_intro}, a nearly identical analysis to the existing rollout variants is possible and the cost improvement condition remains valid.

MPC is a major methodology that primarily applies to optimal control problems with state and control consisting mainly of continuous components and stage-decoupled constraints. Developed independently from DP \cite{testud1978model}, MPC was recognized as a moving horizon approximation scheme for solving infinite horizon problems in some early works, e.g., \cite{keerthi1988optimal}. MPC is also closely connected to rollout, as was pointed out in \cite{bertsekas2005dynamic}: it can be viewed as rollout with one step lookahead with the base policy $\mu^0$ involving $(\ell-1)$-step optimization (lookahead steps in our discussion is referred as predicting horizon in MPC literature). The central issues in MPC when applied to deterministic problems are the feasibility of the on-line numerical implementation, and the stability of the resulting closed-loop system. The stability analysis often revolves around a Lyapunov function that is bounded in certain form by a continuous function \cite[Theorem~7.2]{borrelli2017predictive}, \cite[Theorem~B.13]{rawlings2017model} when the dynamics is not continuous, and is also related to the sequential improvement condition \eqref{eq:sequential_imp1}, which is central to the cost improvement property of rollout. The feasibility issues are addressed separately, and revolve around the concept of reachability \cite{bertsekas1971control}. In particular, one goal of such analysis is to establish a property known as \emph{recursive feasibility}. It means that the feasibility of the MPC problem at current state implies its feasibility at the subsequent state, driven by the current MPC control. As opposed to these approaches, we will show that a line of analysis centered around the fixed point equation \eqref{eq:bellman_mu} and performance improvement of PI can serve as an effective substitute for feasibility and stability analysis. For this to hold, we require that the cost function values $J_{\mu^0}(x)$ are known for a subset of states $x$ that has the property \eqref{eq:s0_intro}, but place no continuity-type restrictions.

Our MPC variant that involves final cost and constraints, and sampled trajectories, coincides with the exact form of LMPC \cite{rosolia2017learning} when applied to problems with continuous dynamics and time-decoupled state and control constraints. Still, our work generalizes substantially LMPC. It can deal with problems with trajectory constraints, and discontinuous dynamics (like the ones involving hybrid systems), and admits multiagent variants, as will be discussed shortly.

\section{Main results}\label{sec:main}
In this section, we will provide details on the data-driven rollout methods and their analysis. We will first introduce the basic form of the rollout algorithm that deals with unconstrained-trajectory problems. It requires a base policy $\mu^0$ and its corresponding cost function over a subset of the state space. We will then extend the method to handle a general set constraint on the state-control trajectories. The basic form will also be extended to incorporate multiple base policies. A variant of the basic form will be introduced in the end where the minimization steps are simplified. This variant is well-suited for multiagent applications.

Throughout our discussion of the first three variants, we make the following assumption. For the last variant, we make a slightly different assumption.
\begin{assum}\label{assum:standing}
The cost nonnegativity condition \eqref{eq:cost} holds. Moreover, for all functions $J$ that map $X$ to $[0,\infty]$, the minimum in the following optimization
\begin{equation*}
    \inf_{u\in U(x)} \big\{g(x,u)+J\big(f(x,u)\big)\big\}
\end{equation*}
is attained for all $x\in X$.
\end{assum}

\subsection{Basic form of data-driven rollout}
Consider the optimal control problem with dynamics \eqref{eq:dynamics} and stage cost \eqref{eq:cost}. We assume that we have computed off-line a subset $S^0$ of the state space, and the cost function values $J_\mu^0(x)$ for all $x\in S^0$ of a
stationary policy $\mu^0$. The subset and the stationary policy have the property
\begin{equation}
    \label{eq:s0_main}
    f\big(x,\mu^0(x)\big)\in S^0,\quad \forall x\in S^0.
\end{equation}
We define $\Bar{J}_{S^0}:X\to[0,\infty]$ as
\begin{equation*}
    \Bar{J}_{S^0}(x)= J_{\mu^0}(x)+\delta_{S^0}(x),
\end{equation*}
where $\delta_{S^0}:X\to\{0,\infty\}$ is the indicator function of set $S^0$ defined as $0$ if $x\in S^0$ and $\infty$ otherwise. For a given state $x$, the rollout algorithm solves the following problem
\begin{subequations}
\label{eq:rollout}
    \begin{align}
	\min_{\{u_k\}_{k=0}^{\ell-1}}& \quad \sum_{k=0}^{\ell-1} g(x_k,u_k)+\Bar{J}_{S^0}(x_\ell)\\
	\mathrm{s.\,t.} & \quad x_{k+1}=f(x_k,u_k),\,k = 0,...,\ell-1,\label{eq:rollout_dynamics}\\
	& \quad  u_k \in U(x_k), \ k = 0,...,\ell-1, \label{eq:rollout_inputconstraints}\\
	& \quad  x_0 = x. \label{eq:rollout_intialcondition}
	\end{align}
\end{subequations}
The optimal value of the problem is denoted as $\Tilde{J}_{S^0}(x)$ (with subscript indicating the dependence on the set $S^0$), and the minimizing sequence is denoted as $(\Tilde{u}_0,\Tilde{u}_1,\dots,\Tilde{u}_{\ell-1})$. The rollout policy $\Tilde{\mu}$ is defined by setting $\Tilde{\mu}(x)=\Tilde{u}_0$.  

When the state and control spaces are continuous and under time-decoupled constraints, and the set $S^0$ contains $m$ sample points, the solution of the problem \eqref{eq:rollout} requires solving $m$ continuous optimization problems, with each of the $m$ sample points used as a fixed terminal state.  The infinite stage costs $g(x,u)=\infty$ imply constraints imposed on state and control pairs. In this case, our rollout algorithm coincides with the LMPC given in \cite{rosolia2017learning}, where more discussions on computational issues can be found. On the other hand, when trajectory constraints are present, the set $S^0$ may introduce additional constraints; cf. Example~\ref{eg:lmpc_lqr_constr}.

The preceding rollout algorithm may be viewed as a generalization of the originally proposed MPC algorithm, whereby we first drive the state to $0$ after $\ell$ steps ($\{0\}$ is the set $S^0$ in our algorithm), and then use the policy that keeps the state at $0$ (this is the policy $\mu^0$ in our algorithm). For our rollout policy, we have the following property hold.

\begin{proposition}\label{prop:rollout_basic}
Let Assumption~\ref{assum:standing} hold. For the rollout policy $\Tilde{\mu}$ defined by \eqref{eq:rollout} with set $S^0$ satisfying the invariance property \eqref{eq:s0_main} under the base policy $\mu^0$, we have 
\begin{equation}
\label{eq:rollout_basic}
    J_{\Tilde{\mu}}(x)\leq \Tilde{J}_{S^0}(x)\leq \Bar{J}_{S^0}(x),\quad \forall x\in X.
\end{equation}
\end{proposition}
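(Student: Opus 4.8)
The plan is to prove the two inequalities separately: the right one by exhibiting a single feasible control sequence, and the left one (the cost improvement) by verifying a sequential-improvement inequality and invoking Prop.~\ref{prop:classic}(b). For the right inequality $\Tilde{J}_{S^0}(x)\le\Bar{J}_{S^0}(x)$ I would argue pointwise. If $x\notin S^0$ then $\Bar{J}_{S^0}(x)=\infty$ and there is nothing to prove. If $x\in S^0$, I would use the control sequence generated by the base policy, $u_k=\mu^0(x_k)$ with $x_0=x$ and $x_{k+1}=f(x_k,\mu^0(x_k))$, $k=0,\dots,\ell-1$. The invariance property \eqref{eq:s0_main} guarantees $x_k\in S^0$ for all $k\le\ell$, so this sequence is feasible for \eqref{eq:rollout} and $\Bar{J}_{S^0}(x_\ell)=J_{\mu^0}(x_\ell)$; iterating Prop.~\ref{prop:classic}(a) $\ell$ times gives $\sum_{k=0}^{\ell-1}g(x_k,\mu^0(x_k))+J_{\mu^0}(x_\ell)=J_{\mu^0}(x)$, so this feasible sequence attains cost $\Bar{J}_{S^0}(x)$, and minimality of $\Tilde{J}_{S^0}(x)$ yields the claim.

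The substantive part is the left inequality $J_{\Tilde{\mu}}(x)\le\Tilde{J}_{S^0}(x)$. Since the stage costs are nonnegative, $\Tilde{J}_{S^0}$ is nonnegative, so by Prop.~\ref{prop:classic}(b) it suffices to verify
\begin{equation*}
 g\big(x,\Tilde{\mu}(x)\big)+\Tilde{J}_{S^0}\big(f\big(x,\Tilde{\mu}(x)\big)\big)\le \Tilde{J}_{S^0}(x),\quad\forall x\in X.
\end{equation*}
This is immediate whenever $\Tilde{J}_{S^0}(x)=\infty$, so I would restrict attention to states with $\Tilde{J}_{S^0}(x)<\infty$. Let $(\Tilde{u}_0,\dots,\Tilde{u}_{\ell-1})$ be the minimizing sequence for \eqref{eq:rollout} at such an $x$, with states $\Tilde{x}_0=x,\Tilde{x}_1,\dots,\Tilde{x}_\ell$, so $\Tilde{\mu}(x)=\Tilde{u}_0$ and $f(x,\Tilde{\mu}(x))=\Tilde{x}_1$. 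The key observation is that finiteness of the optimal value forces $\Bar{J}_{S^0}(\Tilde{x}_\ell)<\infty$, hence $\Tilde{x}_\ell\in S^0$.

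From $\Tilde{x}_1$ I would then build the candidate control sequence $(\Tilde{u}_1,\dots,\Tilde{u}_{\ell-1},\mu^0(\Tilde{x}_\ell))$ of length $\ell$, which drives $\Tilde{x}_1$ through $\Tilde{x}_2,\dots,\Tilde{x}_\ell$ and then, via one step of $\mu^0$, to $f(\Tilde{x}_\ell,\mu^0(\Tilde{x}_\ell))\in S^0$ (again by \eqref{eq:s0_main}); it is therefore feasible for \eqref{eq:rollout} at $\Tilde{x}_1$. Its cost is $\sum_{k=1}^{\ell-1}g(\Tilde{x}_k,\Tilde{u}_k)+g(\Tilde{x}_\ell,\mu^0(\Tilde{x}_\ell))+J_{\mu^0}\big(f(\Tilde{x}_\ell,\mu^0(\Tilde{x}_\ell))\big)$, and Prop.~\ref{prop:classic}(a) collapses the last two terms to $J_{\mu^0}(\Tilde{x}_\ell)=\Bar{J}_{S^0}(\Tilde{x}_\ell)$. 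Hence this candidate has cost $\Tilde{J}_{S^0}(x)-g(x,\Tilde{u}_0)$, and since $\Tilde{J}_{S^0}(\Tilde{x}_1)$ is no larger than the cost of any feasible sequence, I obtain $g(x,\Tilde{u}_0)+\Tilde{J}_{S^0}(\Tilde{x}_1)\le\Tilde{J}_{S^0}(x)$, the required one-step inequality; Prop.~\ref{prop:classic}(b) then delivers $J_{\Tilde{\mu}}\le\Tilde{J}_{S^0}$.

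I expect the main obstacle to be exactly this shift-and-append construction: verifying feasibility and checking that the terminal cost telescopes correctly through the Bellman equation. This is where the invariance property \eqref{eq:s0_main} is indispensable, since it is what simultaneously guarantees that a finite-value optimal trajectory can only terminate inside $S^0$ and that one extra step of $\mu^0$ keeps the state in $S^0$, so that $\Bar{J}_{S^0}$ remains finite and equal to $J_{\mu^0}$ there. A minor preliminary point worth recording is that, under Assumption~\ref{assum:standing}, the $\ell$-step minimization in \eqref{eq:rollout} decomposes into $\ell$ one-step minimizations, each attaining its minimum, so the minimizing sequence and hence $\Tilde{\mu}(x)$ are well defined.
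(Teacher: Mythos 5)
Your proof is correct, and it reaches the result by a genuinely different route from the paper's. The paper interprets \eqref{eq:rollout} as $\ell$ value iterations $J_0=\Bar{J}_{S^0},J_1,\dots,J_\ell=\Tilde{J}_{S^0}$, verifies the sequential improvement condition $J_1\le J_0$ at the level of $\Bar{J}_{S^0}$ (using the invariance of $S^0$ and the Bellman equation for $\mu^0$, exactly as in your right-inequality argument), propagates monotonicity by induction to get $J_\ell\le\cdots\le J_0$, applies Prop.~\ref{prop:classic}(b) with $\Bar{J}=J_{\ell-1}$ to obtain $J_{\Tilde{\mu}}\le J_{\ell-1}$, and then needs one more monotonicity-plus-fixed-point step to sharpen this to $J_{\Tilde{\mu}}\le J_\ell=\Tilde{J}_{S^0}$. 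You instead verify the one-step inequality directly for $\Tilde{J}_{S^0}$ itself, via the shift-and-append construction (discard $\Tilde{u}_0$, append one step of $\mu^0$ at $\Tilde{x}_\ell$, and let the Bellman equation telescope the terminal cost), so a single application of Prop.~\ref{prop:classic}(b) finishes the argument. Your construction is the classical recursive-feasibility argument from MPC, and it makes explicit where invariance of $S^0$ enters (the shifted sequence must still terminate in $S^0$); the paper's VI formulation hides this inside the monotone propagation but transfers with no change to the later variants (multiple policies, simplified rollout), since only the base case $J_1\le J_0$ needs to be re-verified there. The small points you flag---that finiteness of $\Tilde{J}_{S^0}(x)$ forces $\Tilde{x}_\ell\in S^0$, that the case $\Tilde{J}_{S^0}(x)=\infty$ is vacuous, and that attainment of the minimizers follows from Assumption~\ref{assum:standing} via the one-step decomposition---are exactly the right ones, and are handled correctly.
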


\begin{proof}
Using a principle of optimality argument (see, e.g., \cite[Section~1.3]{bertsekas2017dynamic}), the optimization problem \eqref{eq:rollout} can be viewed as performing $\ell$ VI iterations with $J_0=\Bar{J}_{S^0}$, generating $\{J_k\}_{k=0}^\ell$ according to 
\begin{equation}
\label{eq:rollout_vi}
    J_{k+1}(x)=\min_{u\in U(x)}\big\{g(x,u)+J_k\big(f(x,u)\big)\big\},
\end{equation}
and with the rollout policy at $x$ given by
\begin{equation}
\label{eq:rollout_vi_policy}
    \Tilde{\mu}(x)\in \argmin_{u\in U(x)}\big\{g(x,u)+J_{\ell-1}\big(f(x,u)\big)\big\},
\end{equation}
with $\Tilde{J}_{S^0}(x)=J_\ell(x)$. With such a view, we will first show that the sequential improvement condition holds, based upon which the finite sequence $\{J_k\}_{k=0}^\ell$ will be shown to be monotonically decreasing. Finally, by applying Prop.~\ref{prop:classic}(b), the monotonicity property \eqref{eq:monotone} and Prop.~\ref{prop:classic}(a) in this order, we can prove the desired inequality \eqref{eq:rollout_basic}.

By regarding rollout as $\ell$-step VI, we first note that for all $x\not\in S^0$, $J_1(x)\leq J_0(x)=\Bar{J}_{S^0}(x)=\infty$. For $x\in S^0$, we have $J_0(x)=\Bar{J}_{S^0}(x)=J_{\mu^0}(x)$ and $\Bar{J}_{S^0}\big(f\big(x,\mu(x)\big)\big)=J_{\mu^0}\big(f\big(x,\mu(x)\big)\big)$, as per \eqref{eq:s0_main}. Then we have that
\begin{align}
    J_1(x)= &\min_{u\in U(x)}\big\{g(x,u)+\Bar{J}_{S^0}\big(f(x,u)\big)\big\}\nonumber\\
    \leq & g\big(x,\mu^0(x)\big)+\Bar{J}_{S^0}\big(f\big(x,\mu^0(x)\big)\big)\nonumber\\
    =&g\big(x,\mu^0(x)\big)+J_{\mu^0}\big(f\big(x,\mu^0(x)\big)\big)=J_0(x),\label{eq:sequential_basic}
\end{align}
where the first equality is per \eqref{eq:rollout_vi}, the second equality is due to $\Bar{J}_{S^0}\big(f\big(x,\mu^0(x)\big)\big)=J_{\mu^0}\big(f\big(x,\mu^0(x)\big)\big)$ for $x\in S^0$, and the last equality is due to that the fixed point equation holds for $\mu^0$, as per Prop.~\ref{prop:classic}(a), and $J_0(x)=J_{\mu^0}(x)$ for $x\in S^0$.

With $J_1(x)\leq J_0(x)$, we apply induction and assume that $J_{k}(x)\leq J_{k-1}(x)$ for all $x$. Then we have
\begin{align*}
    J_{k+1}(x)=&\min_{u\in U(x)}\big\{g(x,u)+J_k\big(f(x,u)\big)\big\}\\
    \leq& \min_{u\in U(x)}\big\{g(x,u)+J_{k-1}\big(f(x,u)\big)\big\}=J_k(x).
\end{align*}
Therefore, we have that $\Tilde{J}_{S^0}(x)=J_\ell(x)\leq J_{\ell-1}(x)\leq  J_0(x)=\Bar{J}_{S^0}(x)$. Since the rollout policy $\Tilde{\mu}(x)$ is defined by \eqref{eq:rollout_vi_policy}, we then have 
\begin{equation}
\label{eq:feasibility}
    g\big(x,\Tilde{\mu}(x)\big)+J_{\ell-1}\big(f\big(x,\Tilde{\mu}(x)\big)\big)=J_\ell(x)\leq J_{\ell-1}(x)
\end{equation}
for all $x\in X$. Applying Prop.~\ref{prop:classic}(b) with $\Bar{J}$ as $J_{\ell-1}$ here, we have $J_{\Tilde{\mu}}(x)\leq J_{\ell-1}(x)$. By the monotonicity property \eqref{eq:monotone}, we have 
\begin{align*}
    J_{\Tilde{\mu}}(x)=&g\big(x,\Tilde{\mu}(x)\big)+J_{\Tilde{\mu}}\big(f\big(x,\Tilde{\mu}(x)\big)\big)\\
    \leq& g\big(x,\Tilde{\mu}(x)\big)+J_{\ell-1}\big(f\big(x,\Tilde{\mu}(x)\big)\big)=J_\ell(x),
\end{align*}
where the first equality is due to Prop.~\ref{prop:classic}(a), the last equality is due to the definition of $\Tilde{\mu}$. The proof is thus complete.
\end{proof}

\begin{remark}
The property \eqref{eq:s0_main} for the set $S^0$ is called forward invariance (or positive invariance) in the MPC literature. However, such a set is easy to obtain by recording a trajectory, as opposed to conventional forward invariant set where major computation may be needed. Still, our analysis applies to these settings, where $S^0$ may be an ellipsoid or polyhedron, provided that the values of $J_{\mu^0}(x)$ are known for all $x\in S^0$. 
\end{remark}

\begin{remark}
The recursive feasibility of the optimization \eqref{eq:rollout} is implied by sequential improvement \eqref{eq:sequential_basic}. To see this, for a given $x$, we denote by $x'$ its subsequent state under rollout, e.g., $x'=f\big(x,\Tilde{\mu}(x)\big)$. The condition $\Tilde{J}_{S^0}(x)=g\big(x,\Tilde{\mu}(x)\big)+J_{\ell-1}(x')<\infty$ implies $J_{\ell-1}(x')<\infty$. Since sequential improvement implies \eqref{eq:feasibility}, we have $\Tilde{J}_{S^0}(x')=J_\ell(x')\leq J_{\ell-1}(x')<\infty$. In other words, due to the construction of our algorithm and nonnegativity of stage costs, the function $\Tilde{J}_{S^0}$ is monotonically decreasing along the trajectory under the rollout policy $\Tilde{\mu}$. 
\end{remark}

\begin{remark}
The stability of the closed-loop system obtained by the rollout algorithm should be addressed independently of the cost improvement property. Note that the cost function of $\Tilde{\mu}$ is bounded by $\Tilde{J}_{S^0}$. Thus, starting with some $x$ such that $\Tilde{J}_{S^0}(x)<\infty$, the resulting trajectory is guaranteed to have a finite cost, which essentially guarantees stability.
\end{remark}

\begin{remark}\label{rmk:robust}
We note that the inequality \eqref{eq:rollout_basic} holds for all $x\in X$, apart from the elements in $S^0$. This means the rollout method has inherited a robustness property in the MPC context, in the sense that if the subsequent state $\Tilde{x}'$ is different from the anticipated next state $x'$, possibly due to external disturbance, one only needs to ensure $\Tilde{J}_{S^0}(\Tilde{x})<\infty$ (initial feasibility) in order to achieve finite trajectory cost.
\end{remark}

An important aspect in our rollout algorithm is the choice of the set $S^0$. It is natural to speculate that with increased size of $S^0$, better performance may be obtained. Our analysis partially confirms this intuition, in the sense that a performance bound, rather than the policy cost function, is improved, as we will show shortly.

Let $S^0$ and $\Bar{S}^0$ denote two sets satisfying the invariance property \eqref{eq:s0_main} with respect to a common base policy $\mu^0$, and $S^0$ is a subset of $\Bar{S}^0$. Let $\Tilde{J}_{S^0}(x)$ and $\Tilde{J}_{\Bar{S}^0}(x)$ denote the optimal values of the problem \eqref{eq:rollout} when $\Bar{J}_{S^0}$ and $\Bar{J}_{\Bar{S}^0}$ are applied as the final costs respectively. Since $S^0\subset \Bar{S}^0$, we have $\delta_{\Bar{S}^0}\leq \delta_{S^0}$, which implies that $\Bar{J}_{\Bar{S}^0}\leq\Bar{J}_{S^0}$. By applying the monotonicity property \eqref{eq:monotone}, one can show that the VI sequence generated by starting with $\Bar{J}_{\Bar{S}^0}$ is always upper bounded by the one starting with $\Bar{J}_{S^0}$, which leads to $\Tilde{J}_{\Bar{S}^0}\leq\Tilde{J}_{S^0}$. We state the result just described as the following proposition.

\begin{proposition}\label{prop:rollout_basic_sample}
Let Assumption~\ref{assum:standing} hold, $S^0$ and $\Bar{S}^0$ be two sets satisfying the invariance property \eqref{eq:s0_main} under the common base policy $\mu^0$, and $S^0\subset \Bar{S}_0$. Denote by $\Tilde{J}_{S^0}(x)$ and $\Tilde{J}_{\Bar{S}^0}(x)$ the optimal values of the problem \eqref{eq:rollout} when $\Bar{J}_{S^0}$ and $\Bar{J}_{\Bar{S}^0}$ are applied as the final costs respectively. Then we have $\Tilde{J}_{\Bar{S}^0}(x)\leq\Tilde{J}_{S^0}(x),\, \forall x\in X$.
\end{proposition}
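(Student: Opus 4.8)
The plan is to reduce the claim to a single monotonicity-through-iterations argument, exactly mirroring the VI viewpoint already used in the proof of Prop.~\ref{prop:rollout_basic}. First I would compare the two terminal cost functions pointwise. Since $S^0\subset\Bar{S}^0$, every state in $S^0$ lies in $\Bar{S}^0$, so $\delta_{\Bar{S}^0}(x)=0$ whenever $\delta_{S^0}(x)=0$, while for $x\in\Bar{S}^0\setminus S^0$ we have $\delta_{\Bar{S}^0}(x)=0<\infty=\delta_{S^0}(x)$; hence $\delta_{\Bar{S}^0}\le\delta_{S^0}$ everywhere on $X$. Because both terminal costs are built on the \emph{same} base cost function, namely $\Bar{J}_{S^0}=J_{\mu^0}+\delta_{S^0}$ and $\Bar{J}_{\Bar{S}^0}=J_{\mu^0}+\delta_{\Bar{S}^0}$, adding the common $J_{\mu^0}$ preserves the ordering and yields $\Bar{J}_{\Bar{S}^0}(x)\le\Bar{J}_{S^0}(x)$ for all $x\in X$.

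Next I would invoke the principle-of-optimality interpretation established earlier: $\Tilde{J}_{S^0}$ and $\Tilde{J}_{\Bar{S}^0}$ are the respective outputs $J_\ell$ of $\ell$ applications of the VI mapping \eqref{eq:rollout_vi}, started from the terminal costs $J_0=\Bar{J}_{S^0}$ and $J_0=\Bar{J}_{\Bar{S}^0}$. Writing $T$ for that mapping, the crucial fact is that $T$ is monotone: given any two functions $J\le J'$ on $X$, the monotonicity property \eqref{eq:monotone} gives $g(x,u)+J\big(f(x,u)\big)\le g(x,u)+J'\big(f(x,u)\big)$ for every admissible $u\in U(x)$, and taking the infimum over $u$ on both sides (attained under Assumption~\ref{assum:standing}) preserves the inequality, so $(TJ)(x)\le(TJ')(x)$ for all $x\in X$.

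I would then propagate the terminal-cost inequality through the iterations by induction on $k$. The base case $k=0$ is precisely $\Bar{J}_{\Bar{S}^0}\le\Bar{J}_{S^0}$ from the first step, and monotonicity of $T$ carries the inequality from stage $k$ to stage $k+1$. Evaluating at $k=\ell$ gives $\Tilde{J}_{\Bar{S}^0}=T^\ell\Bar{J}_{\Bar{S}^0}\le T^\ell\Bar{J}_{S^0}=\Tilde{J}_{S^0}$ on all of $X$, which is the desired conclusion.

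I do not expect a genuine obstacle here, since all the machinery is already in place; the proof is essentially a clean application of monotone VI iteration. The only points requiring care are arithmetic and inequalities in the extended reals $[0,\infty]$ (so that the $+\infty$ values coming from the indicators behave correctly when they are added to $J_{\mu^0}$ and compared), and the observation that the infimum step respects the ordering, which is immediate from \eqref{eq:monotone} together with the attainment guaranteed by Assumption~\ref{assum:standing}. These are routine and can be dispatched without detailed computation.
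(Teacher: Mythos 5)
Your proposal is correct and follows essentially the same route as the paper: the authors likewise observe that $S^0\subset\Bar{S}^0$ gives $\delta_{\Bar{S}^0}\leq\delta_{S^0}$, hence $\Bar{J}_{\Bar{S}^0}\leq\Bar{J}_{S^0}$, and then propagate this inequality through the $\ell$ value iteration steps via the monotonicity property \eqref{eq:monotone}. Your write-up merely makes the induction and the extended-real-arithmetic details explicit, which the paper leaves as a one-line remark.
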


\subsection{Trajectory constrained rollout}
We now provide a variant of our rollout algorithm that handles the constraint imposed on entire trajectories. In particular, a complete state-control trajectory $\{(x_k,u_k)\}_{k=0}^\infty$ is feasible if it belongs to a set $C_\infty$, e.g., $\{(x_k,u_k)\}_{k=0}^\infty\in C_\infty$. Since our rollout algorithm applies to problems with arbitrary state and control spaces, we can transform the trajectory-constrained problem to an unconstrained one by state augmentation.

To this end, we introduce an information state $e_k$ from some set $E$. Given a partial trajectory $\{(x_i,u_i)\}_{i=0}^k$, its information state can be specified through some suitably defined function $I(\cdot)$ as $e_k=I\big(\{(x_i,u_i)\}_{i=0}^k\big)$. The information state subsumes the information contained in the partial trajectory that is relevant to the trajectory constraint $C_\infty$ in the sense that for every $k$, $\{(x_i,u_i)\}_{i=0}^\infty$ is feasible if and only if
$$\{e_k\}\cup\{(x_i,u_i)\}_{i=k+1}^\infty\in C_\infty.$$
Such an information state always exists. In the extreme case, it could be simply $e_k=\{(x_i,u_i)\}_{i=0}^k$. Accordingly, the dynamics of $e_k$ can be defined through some function $h:E\times X\times U\to E$. Thus, via state augmentation, we obtain a trajectory-unconstrained problem. Its state is $(x_k,e_k)$, while its control is still $u_k$. The constraints imposed on the state $(x_k,e_k)$ is specified implicitly through $C_\infty$. More details on state augmentation and dealing with trajectory constraints can be found in, e.g., \cite[Sec.~1.4]{bertsekas2017dynamic}, \cite[Sec.~3.3]{bertsekas2020rollout}.

Given a feasible sample trajectory $\{(x_k,u_k)\}_{k=0}^\infty$ obtained under some policy $\mu^0$, we can construct the sample set $S^0$ for the corresponding trajectory-unconstrained problem as
\begin{align*}
    S^0=\cup_{k=0}^\infty\big\{(x,e)\,\big|\,x=x_k,\,\{e\}\cup\{(x_i,u_i)\}_{i=k+1}^\infty\in C_\infty\big\}.
\end{align*}
Thus, the algorithm of the basic form applies and its results hold for this transformed problems as well.

\subsection{Rollout with multiple policies}
We now consider the case where there are multiple subsets $S^i$ that have the property \eqref{eq:s0_main} with respect to the corresponding policies $\mu^i$. Let $\mathcal{I}$ denote the collection of indices of these sets. We define $\Bar{J}_S:X\to[0,\infty]$ as
\begin{equation}\label{eq:js}
    \Bar{J}_S(x)=\inf_{i\in \mathcal{I}_x}\Bar{J}_{S^i}(x),
\end{equation}
where $\mathcal{I}_x=\{i\in\mathcal{I}\,|\,x\in S^i\}$, with the understanding that $\inf\emptyset = \infty$, so that $S=\cup_{i\in \mathcal{I}}S^i$. In addition, if $x\in S$, we denote as $i^*_x\in\argmin_{\mathcal{I}_x} J_{\mu^i}(x)$, and we have that
\begin{align}
    &\min_{u\in U(x)}\big\{g(x,u)+\Bar{J}_{S}\big(f(x,u)\big)\big\}\nonumber\\
    \leq & g\big(x,\mu^{i^*_x}(x)\big)+\Bar{J}_{S}\big(f\big(x,\mu^{i^*_x}(x)\big)\big)\nonumber\\
    \leq&g\big(x,\mu^{i^*_x}(x)\big)+J_{\mu^{i^*_x}}\big(f\big(x,\mu^{i^*_x}(x)\big)\big)=\Bar{J}_{S}(x),\label{eq:sequential_multiple}
\end{align}
where the second inequality is due to \eqref{eq:js}. To see this, for $x\in S$ with $i^*_x\in\argmin_{\mathcal{I}_x} J_{\mu^i}(x)$, we have $f\big(x,\mu^{i^*_x}(x)\big)\in S^{i^*_x}$ as is required by the property \eqref{eq:s0_main}. Then by applying the basic form of rollout \eqref{eq:rollout_basic} with $\Bar{J}_{S^0}$ replaced by $\Bar{J}_S$, we have the following result hold. The proof is similar of the previous one by replacing \eqref{eq:sequential_basic} with \eqref{eq:sequential_multiple}, and is thus neglected.
\begin{proposition}\label{prop:rollout_multiple}
Let Assumption~\ref{assum:standing} hold. For the rollout policy $\Tilde{\mu}$ defined by \eqref{eq:rollout} with $\Bar{J}_{S^0}$ replaced by $\Bar{J}_S$ given in \eqref{eq:js}, sets $S^i$, $i\in\mathcal{I}$, satisfying the property \eqref{eq:s0_main} with respect to $\mu^i$, and optimal value of the problem \eqref{eq:rollout} denoted as $\Tilde{J}_{S}$, we have $J_{\Tilde{\mu}}(x)\leq \Tilde{J}_S(x)\leq \Bar{J}_{S}(x),\, \forall x\in X.$
\end{proposition}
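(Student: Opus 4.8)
The plan is to mirror the proof of Prop.~\ref{prop:rollout_basic} closely, since the only structural change is the terminal cost: here the value iteration is initialized from $\bar J_S$ rather than $\bar J_{S^0}$. Concretely, I would first invoke the principle of optimality to reinterpret the rollout optimization \eqref{eq:rollout} (with $\bar J_S$ as terminal cost) as $\ell$ steps of the VI recursion \eqref{eq:rollout_vi} initialized at $J_0=\bar J_S$, generating a finite sequence $\{J_k\}_{k=0}^\ell$ with $\tilde J_S(x)=J_\ell(x)$ and the rollout policy defined by \eqref{eq:rollout_vi_policy}. The entire argument then reduces to establishing the single-step sequential improvement inequality $J_1(x)\le J_0(x)$ for all $x\in X$; once this is in place, monotonicity of the finite sequence and the two target inequalities follow exactly as before.

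The heart of the argument, and the one step that genuinely differs from Prop.~\ref{prop:rollout_basic}, is the sequential improvement bound, which is precisely \eqref{eq:sequential_multiple}. For $x\notin S$ we have $\bar J_S(x)=\infty$, so $J_1(x)\le J_0(x)$ is immediate. For $x\in S$, I would select a minimizing index $i^*_x\in\argmin_{i\in\mathcal I_x}J_{\mu^i}(x)$, so that $\bar J_S(x)=J_{\mu^{i^*_x}}(x)$, using the fact that $\bar J_{S^i}(x)=J_{\mu^i}(x)$ whenever $x\in S^i$ (the indicator vanishes there). Bounding the minimization over $u$ by the single control $\mu^{i^*_x}(x)$ gives $J_1(x)\le g(x,\mu^{i^*_x}(x))+\bar J_S(f(x,\mu^{i^*_x}(x)))$. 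Two facts make this collapse back to $J_0(x)$: (i) invariance \eqref{eq:s0_main} for the set $S^{i^*_x}$ guarantees $f(x,\mu^{i^*_x}(x))\in S^{i^*_x}\subset S$, and since $\bar J_S$ is an infimum it is upper bounded there by $\bar J_{S^{i^*_x}}=J_{\mu^{i^*_x}}$; and (ii) the fixed point equation of Prop.~\ref{prop:classic}(a) for $\mu^{i^*_x}$ then yields $g(x,\mu^{i^*_x}(x))+J_{\mu^{i^*_x}}(f(x,\mu^{i^*_x}(x)))=J_{\mu^{i^*_x}}(x)=\bar J_S(x)=J_0(x)$.

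With $J_1\le J_0$ established, I would finish via the identical induction and concluding steps as in Prop.~\ref{prop:rollout_basic}: assuming $J_k\le J_{k-1}$, the monotonicity property \eqref{eq:monotone} applied inside the recursion \eqref{eq:rollout_vi} gives $J_{k+1}\le J_k$, whence $\tilde J_S(x)=J_\ell(x)\le J_0(x)=\bar J_S(x)$, which is the second inequality. For the first inequality, I would note that the rollout policy attains the minimum defining $J_\ell$, so $g(x,\tilde\mu(x))+J_{\ell-1}(f(x,\tilde\mu(x)))=J_\ell(x)\le J_{\ell-1}(x)$ for all $x$; applying Prop.~\ref{prop:classic}(b) with $\bar J=J_{\ell-1}$ gives $J_{\tilde\mu}\le J_{\ell-1}$, and then Prop.~\ref{prop:classic}(a) together with monotonicity \eqref{eq:monotone} upgrades this to $J_{\tilde\mu}(x)\le g(x,\tilde\mu(x))+J_{\ell-1}(f(x,\tilde\mu(x)))=J_\ell(x)=\tilde J_S(x)$.

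I expect the main obstacle to be the bookkeeping in step (i): one must use the invariance of the \emph{specific} minimizing set $S^{i^*_x}$, not of $S$ as a whole (which need not be invariant under any single policy), to guarantee the successor lands in $S$, and then exploit the infimum structure of $\bar J_S$ to bound it by the cost of that same policy $\mu^{i^*_x}$. A secondary technical point worth flagging is that the selection $i^*_x\in\argmin_{i\in\mathcal I_x}J_{\mu^i}(x)$ presumes the infimum in \eqref{eq:js} is attained; this is automatic when $\mathcal I$ is finite, and otherwise one replaces $i^*_x$ by a near-minimizer and passes to the limit, which leaves the conclusion unchanged.
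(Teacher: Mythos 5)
Your proposal is correct and follows essentially the same route as the paper: the paper proves this proposition by establishing the sequential improvement inequality \eqref{eq:sequential_multiple} via the minimizing index $i^*_x$, the invariance of the specific set $S^{i^*_x}$, the infimum structure of $\bar J_S$, and the fixed point equation of Prop.~\ref{prop:classic}(a), and then declares the rest identical to the proof of Prop.~\ref{prop:rollout_basic}. Your additional remark about attainment of the infimum in \eqref{eq:js} when $\mathcal{I}$ is infinite is a minor point of care that the paper glosses over, but it does not change the argument.
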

\begin{remark}\label{rmk:multiple}
If the set $S$ is replaced by some set $\Bar{S}=\cup_{i\in \mathcal{I}}\Bar{S}^i$, where the sets $\Bar{S}^i$, $i\in\mathcal{I}$, have the invariance property \eqref{eq:s0_main} under $\mu^i$, and $S^i\subset\Bar{S}^i$, a result on performance bound improvement, similar to Prop.~\ref{prop:rollout_basic_sample}, can be established.
\end{remark}

\subsection{Simplified rollout}
For yet another variant, we consider again the same problem as in the basic form case. This time, however, given a set $S^0$ satisfying the property \eqref{eq:s0_main}, we assume that we have constructed nonempty control constraints $\Bar{U}(x)\subset U(x)$ such that $\mu^0(x)\in \Bar{U}(x)$ and the minimum is attained in the relevant minimization. We formalize the scheme by first introducing the following assumption.
\begin{assum}\label{assum:simple}
The cost nonnegativity condition \eqref{eq:cost} holds. The set $S^0$ fulfills the property \eqref{eq:s0_main} under the base policy $\mu^0$. Moreover, for all functions $J$ that map $X$ to $[0,\infty]$, the minimum of the following optimization
\begin{equation*}
    \inf_{u\in \Bar{U}(x)} \big\{g(x,u)+J\big(f(x,u)\big)\big\}
\end{equation*}
is attained for all $x\in X$, where $\Bar{U}(x)\subset U(x)$ are nonempty subsets such that $\mu^0(x)\in \Bar{U}(x)$ for all $x\in S^0$.
\end{assum}

With Assumption~\ref{assum:simple}, we introduce a variant of the basic rollout form, which we call simplified rollout. The simplification is achieved by replacing the constraint $u_k\in U(x_k)$ in \eqref{eq:rollout_inputconstraints} with $u_k\in \Bar{U}(x_k)$, $k = 0,...,\ell-1$, while keeping the other part of the algorithm unchanged. This method is relevant
in a multiagent context, where for example the control $u$ is composed of two parts $u=(v,w)$ that are under the disposal of two agents. At a given state $x$, the base policy $\mu^0$ assigns the values $v^0$ and $w^0$ to those agents. Then the minimization 
\begin{equation*}
    \min_{(v,w)\in U(x)}\big\{g\big(x,(v,w)\big)+J\big(f\big(x,(v,w)\big)\big)\big\}
\end{equation*}
may be replaced by first solving 
\begin{equation*}
    \min_{(v,w)\in U(x)}\big\{g\big(x,(v,w)\big)+J\big(f\big(x,(v,w)\big)\big)\big\},\;\mathrm{s.\,t.}\,w=w^0,
\end{equation*}
with minimum attained at $(\Tilde{v},w^0)$. This step is followed by
\begin{equation*}
    \min_{(v,w)\in U(x)}\big\{g\big(x,(v,w)\big)+J\big(f\big(x,(v,w)\big)\big)\big\},\;\mathrm{s.\,t.}\,v=\Tilde{v}.
\end{equation*}
In doing so, a constraint set $\Bar{U}(x)$ is implicitly constructed, cf. Fig.~\ref{fig:simplified}. This scheme can be extended to the case of multiple agents, and with further modifications, can be made suitable for distributed computation (see \cite{bertsekas2021multiagent}, \cite{bertsekas2020rollout}). We state without proof the performance guarantee that is similar to the previous basic form.
\begin{figure}[ht!]
    \centering
    \includegraphics[width=0.7\linewidth]{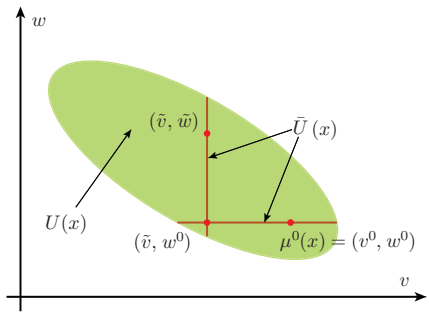}
    \caption{Construction of the set $\Bar{U}(x)$. The original control constraint set $U(x)$ is the green ellipsoid. Starting from the horizontal line segment contained in the set $U(x)$ with $w=w^0$, the first agent optimizes its control and attains the optimum at $(\Tilde{v},w^0)$. The second agent then searches on the line segment contained in the set $U(x)$ with $v=\Tilde{v}$. Thus, the implicitly constructed set $\Bar{U}(x)$ is the union of those two line segments.}
    \label{fig:simplified}
\end{figure}

\begin{proposition}\label{prop:rollout_simple}
Let Assumption~\ref{assum:simple} hold. For the rollout policy $\Tilde{\mu}$ defined by \eqref{eq:rollout} with the constraint $u_k\in U(x_k)$ in \eqref{eq:rollout_inputconstraints} replaced by $u_k\in \Bar{U}(x_k)$ so that $\mu^0(x_k)\in \Bar{U}(x_k)$, $k = 0,...,\ell-1$, we have $J_{\Tilde{\mu}}(x)\leq \Tilde{J}_{S^0}(x)\leq \Bar{J}_{S^0}(x),\, \forall x\in X.$
\end{proposition}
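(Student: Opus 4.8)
The plan is to follow almost verbatim the proof of Proposition~\ref{prop:rollout_basic}, the only structural change being that every minimization now ranges over the restricted set $\Bar{U}(x)$ rather than $U(x)$. First I would reinterpret the restricted version of \eqref{eq:rollout} as $\ell$ steps of value iteration started from $J_0=\Bar{J}_{S^0}$, now generated by
\begin{equation*}
    J_{k+1}(x)=\min_{u\in \Bar{U}(x)}\big\{g(x,u)+J_k\big(f(x,u)\big)\big\},
\end{equation*}
with the rollout policy read off as $\Tilde{\mu}(x)\in\argmin_{u\in \Bar{U}(x)}\big\{g(x,u)+J_{\ell-1}(f(x,u))\big\}$ and $\Tilde{J}_{S^0}(x)=J_\ell(x)$. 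Assumption~\ref{assum:simple} guarantees these minima are attained.

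The decisive observation, which makes the restriction harmless, is that the sequential improvement inequality $J_1(x)\leq J_0(x)$ still holds. For $x\notin S^0$ this is immediate since $J_0(x)=\infty$. For $x\in S^0$, Assumption~\ref{assum:simple} ensures $\mu^0(x)\in\Bar{U}(x)$, so the particular control $\mu^0(x)$ remains feasible in the restricted minimization; evaluating the objective at $\mu^0(x)$ and using the invariance property \eqref{eq:s0_main} together with the fixed point equation Prop.~\ref{prop:classic}(a) reproduces exactly the chain \eqref{eq:sequential_basic}, yielding $J_1(x)\leq J_0(x)$. This is the only place in the argument where a specific control is needed, and the condition $\mu^0(x)\in\Bar{U}(x)$ supplies it.

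From here the argument is unchanged. The induction establishing that $\{J_k\}_{k=0}^\ell$ is monotonically decreasing goes through because the monotonicity property \eqref{eq:monotone} is pointwise in $u$, so taking the minimum over $\Bar{U}(x)$ on both sides preserves $J_{k+1}\leq J_k$; hence $\Tilde{J}_{S^0}(x)=J_\ell(x)\leq J_{\ell-1}(x)\leq J_0(x)=\Bar{J}_{S^0}(x)$, the second inequality of the claim. For the first inequality I note that $\Tilde{\mu}(x)\in\Bar{U}(x)\subset U(x)$, so $\Tilde{\mu}$ is a legitimate stationary policy for the original (unrestricted) problem and Prop.~\ref{prop:classic} applies to it. The defining optimality of $\Tilde{\mu}$ gives $g\big(x,\Tilde{\mu}(x)\big)+J_{\ell-1}\big(f(x,\Tilde{\mu}(x))\big)=J_\ell(x)\leq J_{\ell-1}(x)$, so Prop.~\ref{prop:classic}(b) with $\Bar{J}=J_{\ell-1}$ gives $J_{\Tilde{\mu}}\leq J_{\ell-1}$; combining the Bellman equation Prop.~\ref{prop:classic}(a) for $\Tilde{\mu}$ with monotonicity \eqref{eq:monotone} then closes the loop to $J_{\Tilde{\mu}}(x)\leq J_\ell(x)=\Tilde{J}_{S^0}(x)$.

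The nearest thing to an obstacle is simply to verify that shrinking the admissible set to $\Bar{U}(x)$ invalidates neither the sequential improvement step nor the final feasibility of $\Tilde{\mu}$ for the original problem. Both are settled by the two clauses built into Assumption~\ref{assum:simple}: the membership $\mu^0(x)\in\Bar{U}(x)$ on $S^0$ rescues the former, while the inclusion $\Bar{U}(x)\subset U(x)$ rescues the latter. Every remaining step merely propagates monotonicity and is agnostic to which control set the minimization uses, so no estimate beyond those already in the basic form is required.
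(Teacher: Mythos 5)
Your proof is correct and follows exactly the route the paper intends: the paper states Proposition~\ref{prop:rollout_simple} without proof, remarking only that the argument mirrors that of Proposition~\ref{prop:rollout_basic}, and your adaptation supplies precisely the two points that make the restriction to $\Bar{U}(x)$ harmless --- the membership $\mu^0(x)\in\Bar{U}(x)$ on $S^0$ preserves the sequential improvement step \eqref{eq:sequential_basic}, and the inclusion $\Bar{U}(x)\subset U(x)$ keeps $\Tilde{\mu}$ admissible so that Prop.~\ref{prop:classic} applies. No gaps.
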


\section{Illustrating examples}\label{sec:exmp}
In this section, we illustrate through examples the validity of our main results. Our first example is from \cite{bemporad1999control} and involves discontinuous dynamics. In the second example, we illustrate a state augmentation technique, as applied to a trajectory-constrained problem modified from \cite[Example~A]{rosolia2017learning}. The third example involves a collision avoidance situation, where simplified rollout is applied. In the last example, we consider a variant of a traveling salesman example, adapted from \cite[Example~1.3.1]{bertsekas2020rollout}. It demonstrates the conceptual equivalence between the rollout with multiple heuristics and LMPC \cite{rosolia2017learning}. The numerical examples~\ref{eg:lmpc_hybrid}, \ref{eg:lmpc_lqr_constr} and \ref{eq:lmpc_multiagent} are solved in MATLAB with toolbox YALMIP \cite{lofberg2004yalmip} and solver \texttt{fmincon}.

With our examples, we will aim to elucidate the following points:
\begin{itemize}
    \item[(1)] The performance improvement of rollout is valid regardless of the nature of the state and control spaces.
    \item[(2)] The proposed algorithm can be used for problems with discontinuous dynamics, consistent with the theory (cf. Example~\ref{eg:lmpc_hybrid}).
    \item[(3)] The rollout algorithms are applicable to trajectory-constrained problems once suitable state augmentation is carried out (cf. Example~\ref{eg:lmpc_lqr_constr}).
    \item[(4)] The simplified rollout enables distributed computation and applies to multiagent problems (cf. Example~\ref{eq:lmpc_multiagent}).
    \item[(5)] Further performance improvement may be obtained via applying multiple heuristics and/or enlarging the sample sets (cf. Example~\ref{eg:lmpc_tsp}).
\end{itemize}

\begin{exmp}\label{eg:lmpc_hybrid}(Discontinuous dynamics) In this example, we apply the basic form of rollout to a problem involving a hybrid system. For this type of problems, conventional analytic approaches for MPC require substantial modifications, while the rollout viewpoint we take applies to such problems without customization. The dynamics is given as 
\begin{equation*}
    f(x,u)=0.8\begin{bmatrix}
        \cos(\beta(x)) & -\sin(\beta(x))\\
        \sin(\beta(x)) & \cos(\beta(x))\end{bmatrix}x+\begin{bmatrix}
        0\\
        1\end{bmatrix}u,
\end{equation*}
where $X=[-10,10]^2\subset \Re^2$, $U= [-1,1]\subset \Re$, where $\Re^n$ denotes the $n$-dimensional Euclidean space. The function $\beta(\cdot)$ is defined as $\frac{\pi}{3}$ if $[1\;0]x \geq 0$ and $-\frac{\pi}{3}$ otherwise. The stage cost is defined as $g(x,u)=x^{\text{T}}x$. The discountinous dynamics is modelled by using binary variables, as in \cite[Example~4.1]{bemporad1999control}. Since the system is stable when the control is set to zero, we use as initial feasible solution $\mu^0(x)=0$ for all $x$. The final stage cost $\Bar{J}_{S^0}$ of rollout is computed accordingly. We compute the trajectory costs of the base policy $\mu^0$, the rollout policy $\Tilde{\mu}$ with $\ell=5$, and a classical MPC policy with horizon $\ell=10$, starting from initial states $(1,1)$ and $(8,-9)$ respectively. The corresponding costs are listed in Table~\ref{table:lmpc_hybrid}. It can be seen the cost improvement property indeed holds. The state trajectories under these policies, starting from initial states $(1,1)$ and $(8,-9)$, are shown in Figs.~\ref{fig:lmpc_hybrid_1_1} and \ref{fig:lmpc_hybrid_8_9} respectively.
\begin{table}[ht!]
\caption{Trajectory costs under different policies in Example~\ref{eg:lmpc_hybrid}.}
\label{table:lmpc_hybrid}
\centering
\begin{tabular}{ |c|c|c|c| } 
\hline
 Initial state & $J_{\mu^0}$ & $J_{\Tilde{\mu}}$ & MPC cost \\
\hline
$[1\, 1]^{\text{T}}$ & $5.5555$ & $5.0162$ & $7.2811$\\
\hline
$[8\, -9]^{\text{T}}$ & $402.7778$ & $318.9486$ & $355.7865$ \\
\hline
\end{tabular}

\end{table}

\begin{figure}[ht!]
    \centering
    \includegraphics[width=0.8\linewidth]{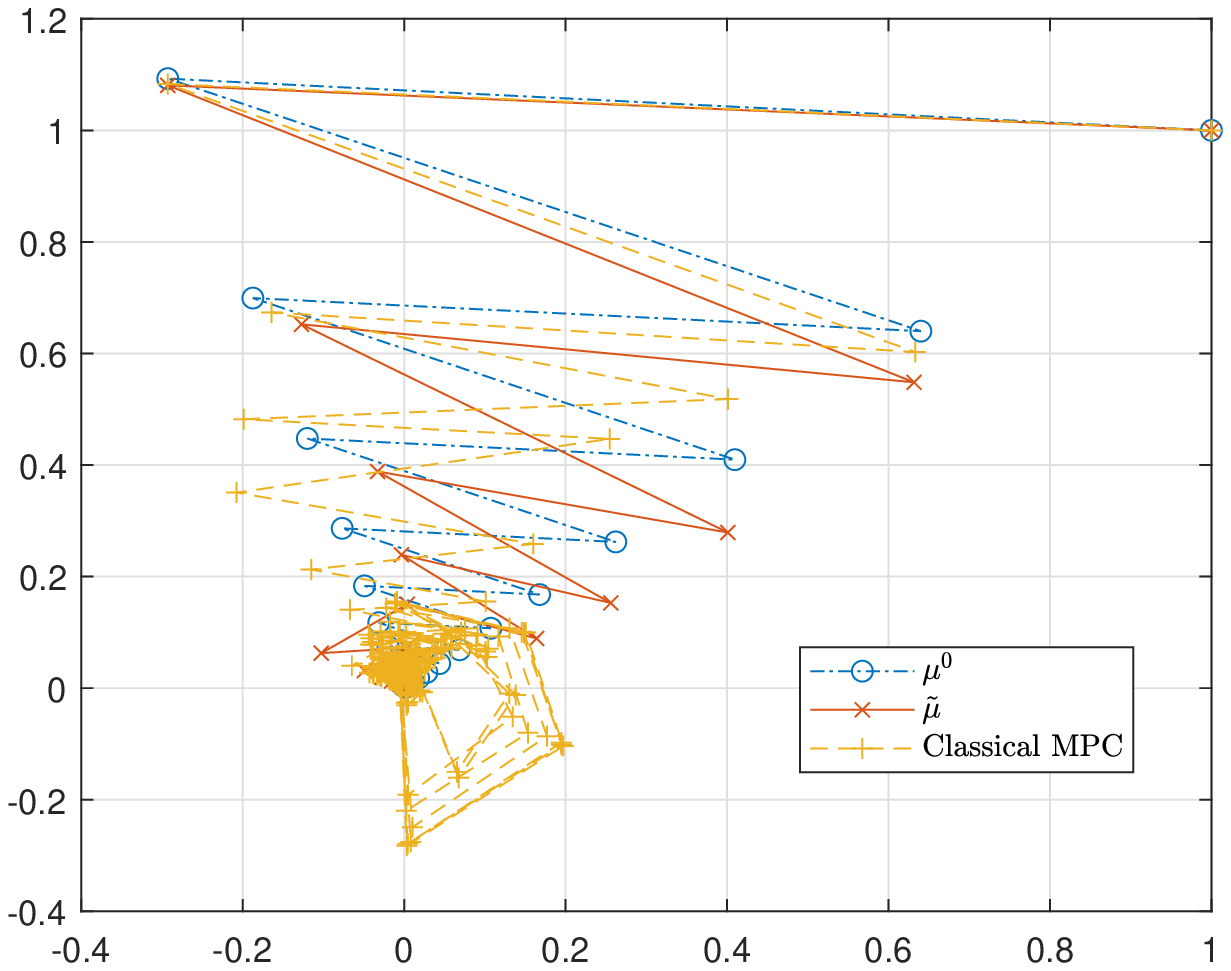}
    \caption{Trajectories under the base policy $\mu^0$, rollout policy $\Tilde{\mu}$ with $\ell=5$, and the classical MPC policy with horizon $10$, starting from $(1,1)$.}
    \label{fig:lmpc_hybrid_1_1}
\end{figure}

\begin{figure}[ht!]
    \centering
    \includegraphics[width=0.8\linewidth]{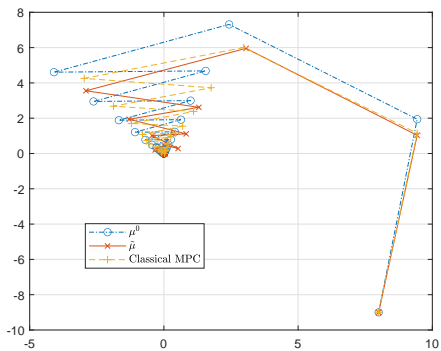}
    \caption{Trajectories under the base policy $\mu^0$, rollout policy $\Tilde{\mu}$ with $\ell=5$, and the classical MPC policy with horizon $10$, starting from $(8,-9)$.}
    \label{fig:lmpc_hybrid_8_9}
\end{figure}

\end{exmp}

\begin{exmp}\label{eg:lmpc_lqr_constr}(Trajectory-constrained problem) We consider a trajectory-constrained problem, and demonstrate the use of state augmentation. The system dynamics is given as $f(x,u)=Ax+Bu$, where 
\begin{equation*}
    A = \begin{bmatrix}
        1 & 1\\
        0 & 1\end{bmatrix},\;B=\begin{bmatrix}
        0\\
        1\end{bmatrix}.
\end{equation*}
The stage cost is given as $g(x,u)=x^{\text{T}}x+u^2+\delta_C(x)$ with constraint set $C=[-4,4]^2$, $U(x)=U=[-1,1]$. We use the initial state $x^0=[-3.95\, -0.05]^{\text{T}}$. The lookahead length is $\ell=4$. All of above settings are the same as those given in \cite[Example~A]{rosolia2017learning}. In addition to the box constraints, we impose constraints on entire trajectories. In particular, we require that a complete trajectory driven under some control sequence $\{u_k\}_{k=0}^\infty$ shall consume energy no more than $0.5$, e.g., $\sum_{k=0}^\infty u_k^2\leq0.5$. This type of constraint goes beyond what is accounted by MPC methodology that is couched on Lyapunov-based analysis. However, the DP framework is flexible and admits modifications that can handle the problem. To this end, we apply state augmentation and define as our new state $y=(x,e)$ where $e$ stands for energy budget that takes values in the interval $[0,0.5]$, and its dynamics is $e_{k+1}=e_k-u_k^2$.

Suppose we have one trajectory $\{x_0,x_1,\dots\}$ under policy $\mu^0$ that fulfills the state, control constraints and trajectory constraint. Then the sample set $S^0$ is given as
\begin{equation*}
    S^0 = \cup_{i=0}^\infty \bigg\{(x,e)\,\bigg|\,x=x_i,\,e+\sum_{j=i}^\infty(\mu^0(x_j))^2\leq 0.5\bigg\}. 
\end{equation*}
Thus, even if the trajectory $\{x_0,x_1,\dots\}$ has a finite length, it `generates' infinite samples in our augmented state space. So from the perspective of the augmented state $y$, the sample set $S^0$ contains infinitely many trajectories and the basic form of rollout can be applied here. 

The obtained solution, in comparison with the initial feasible solution, and the trajectory-unconstrained suboptimal solutions are shown in Fig.~\ref{fig:lmpc_lqr_constrained}. The initial feasible solution has a total cost $74.7492$, with energy consumption $0.1853$, thus feasible; the suboptimal solution without trajectory constraint has a total cost $49.9164$. However, it consumes total energy $1.8616$, and is thus infeasible. The suboptimal solution based upon the reformulated state results in a trajectory with total cost $59.4915$ and energy consumption $0.5$. 

\begin{figure}[ht!]

    \centering
    \includegraphics[width=0.8\linewidth]{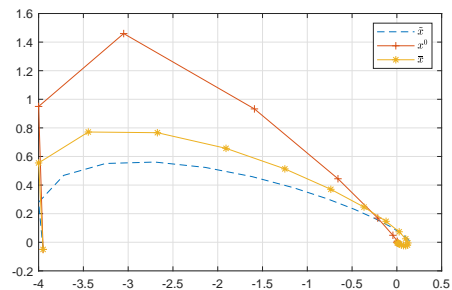}

    \caption{The initial feasible trajectory, marked with dashed blue line and labelled $\Tilde{x}$, has total cost $74.7492$ and energy consumption $0.1853$. The solution given by trajectory-unconstrained problem is in red and labelled with $x^0$. It has total cost $49.9164$, but consumes $1.8616$ energy, thus infeasible. The solution based upon reformulated state is in yellow and it has total cost $59.4915$ and energy consumption $0.5$.}

    \label{fig:lmpc_lqr_constrained}
\end{figure}
\end{exmp}

\begin{exmp}\label{eq:lmpc_multiagent}(Simplified rollout scheme)
To illustrate the simplified rollout when applied to multiagent system, we describe briefly an example of two vehicle path planning. Both vehicles are governed by the kinematic bicycle model discretized with some sampling time $\Delta_t$ and given as
\begin{align*}
    y_{k+1}=&y_k+\Delta_tv_k\cos(\psi_k),\;&&z_{k+1}=z_k+\Delta_tv_k\sin(\psi_k),\\
    v_{k+1}=&v_k+\Delta_ta_k, &&\psi_{k+1}=\psi_k+\Delta_t w_k,
\end{align*}
where $a_k$ and $w_k$ are controls that are subject to box constraints. The states at stage $k$ are denoted collectively by $x_k=(x_k^1,x_k^2)\in \Re^8$, where $x_k^i$, $i=1,2$, is the state associated with vehicle $i$. Similarly, the controls are collectively denoted by $u_k=(u_k^1,u_k^2)\in \Re^4$. The vehicles are tasked to plan shortest paths to their respective targets while remaining safe distances between each other.

At state $x_k$, we assume that a feasible trajectory $S^0=\{x_k,\Bar{x}_{k+1},\Bar{x}_{k+2},\dots\}$ has been obtained under policy $\mu^0=(\mu^0_1,\mu^0_2)$, where $\mu^0_i$, $i=1,2$, prescribes controls to vehicle $i$. When simplified rollout is applied to this problem, vehicle $1$ optimizes over $\{u^1_{k+i}\}_{i=0}^{\ell-1}$ while vehicle $2$ is assumed to apply $\mu_2^0$ (or equivalently, to follow the trajectory $\{\Bar{x}_{k+i}^2\}_{i=1}^\ell$). To be consistent with the rest of the solution, vehicle $1$ also needs to arrive at $\Bar{x}_{k+\ell}^1$ at the end of the trajectory. Once the optimal is attained at $\{\Tilde{u}^1_{k+i}\}_{i=0}^{\ell-1}$, similar procedure is repeated by vehicle $2$, except that vehicle $1$ uses the newly computed $\{\Tilde{u}^1_{k+i}\}_{i=0}^{\ell-1}$. This procedure is repeated with best solution $S^0$ updated accordingly. The obtained solution is guaranteed to outperform initial solution through sequentially optimizing controls of two vehicles.

One important feature of this scheme is that it allows distributed computation, with relevant control signals (or equivalently, planned tentative paths) communicated between each other. A similar scheme in MPC literature is known as cooperative distributed MPC \cite{stewart2011cooperative}. What is novel here is the construction of final state constraint and final cost, which is based upon the collected feasible paths. 

\end{exmp}

\begin{exmp}\label{eg:lmpc_tsp} (A discrete optimization problem) We consider a variant of the classical travelling salesman problem. The explicit illustration of solutions elucidates the mechanism for the performance improvement from the use of multiple heuristics and/or enlarging the sample set. Starting from a city A, the task is to traverse three cities B, C, D, and go back to the city A. All city-wise travels have positive costs, while staying in the same city before completing the trip has an infinite cost. The exact values of costs are immaterial to our purpose, and one can find the corresponding numerical example in \cite[Example~1.3.1]{bertsekas2020rollout}. All trips that end with city A and include all four cities belong to a stopping set $X_s$, e.g., ABCDA belongs to $X_s$ while ABABA does not. Since cities can be repeatedly visited (e.g., ABABA), some policies may have infinite costs. Still, our problem is equivalent to the classical problem in the sense that they admit the same optimal solution.

Suppose that the optimal schedule is ABDCA. Let $\mu^0$ and $\mu^1$ be two policies that bring states to $X_s$, where starting from $\text{A}$, $\mu^0$ results in the complete trajectory $S^0=\{\text{A,\,AC,\,ACD,\,ACDB,\,ACDBA}\}$, and $\mu^1$ follows alphabetic order and gives $S^1=\{\text{A,\,AB,\,ABC,\,ABCD,\,ABCDA}\}$. The policy $\mu^0$ is inferior to $\mu^1$, and neither is optimal. When the basic form with lookahead steps $\ell=2$ and $S^0$ is used, one can see that $\Tilde{\mu}$ coincides with $\mu^0$ as any other choice would have an infinite cost. If rollout with multiple heuristics is applied, with the set $S^0$ replaced by $S=S^0\cup S^1$, the rollout policy would recover the policy $\mu^1$, as shown in Fig.~\ref{fig:lmpc_tsp}. The performance improvement is due to the use of multiple heuristics. In either case, the rollout solutions do not contain cycles like ABABA. In addition, this example shows that when the data-driven rollout converges, it may converge at a suboptimal schedule, namely ABCDA.

\begin{figure}[ht!]
    \centering
    \includegraphics[width=0.6\linewidth]{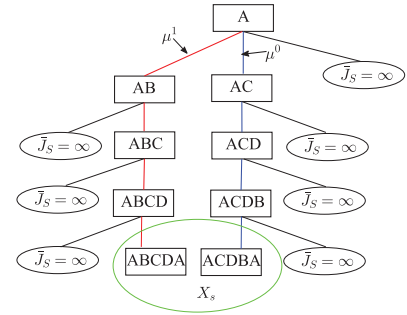}

    \caption{The travelling salesman example with data-driven rollout method. The sampled trajectory under $\mu^0$ is highlighted in blue, and the one under $\mu^1$ is in red. The ellipsoids with black boundaries are subsets of $X$ where $\Bar{J}_S$ is infinity. The ellipsoid with green boundary belongs to $X_s$.}

    \label{fig:lmpc_tsp}
\end{figure}

On the other hand, if a trajectory starting from ABD and under policy $\mu^0$ is recorded and added to $S$, then our method obtains the optimal schedule. The new path that is found by the rollout method is highlighted in orange in Fig.~\ref{fig:lmpc_tsp_explore}. It can be seen that this path can not be generated by either $\mu^0$ or $\mu^1$. This indicates the value of exploration, and is consistent with the Remark~\ref{rmk:multiple}. 

\begin{figure}[ht!]
    \centering
    \includegraphics[width=0.6\linewidth]{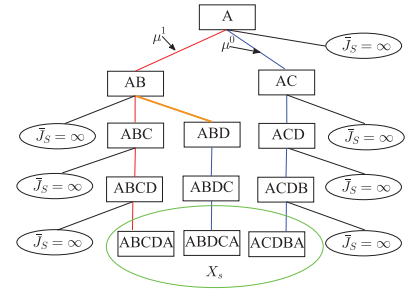}
    \caption{The travelling salesman example with rollout method where new samples ABD, ABDC, ABDCA is added to the sample set $S$. The path highlighted in orange is a new path that is found by rollout, which could not be generated by either $\mu^0$ or $\mu^1$.}
    \label{fig:lmpc_tsp_explore}
\end{figure}
\end{exmp}


\section{CONCLUSIONS}\label{sec:con}
We considered infinite horizon deterministic optimal control problems with nonnegative stage costs. By drawing inspiration from LMPC, we proposed a rollout algorithm variant that applies to problems with arbitrary state and control spaces, discontinuous dynamics, admits extensions for trajectory constrained problems, and applies to multiagent problems. We illustrated the validity of our assertions with various examples from combinatorial optimization, trajectory constrained problems, hybrid system optimization, and multiagent problems. Our method relies upon knowing a perfect model. The robustness issues that arise in the face of model mismatches require further research and numerical experimentation.


\bibliographystyle{IEEEtran}
\bibliography{ref}

\end{document}